\newcolumntype{L}[1]{>{\raggedright\let\newline\\\arraybackslash\hspace{0pt}}m{#1}}
\newcolumntype{C}[1]{>{\centering\let\newline\\\arraybackslash\hspace{0pt}}m{#1}}
\newcolumntype{R}[1]{>{\raggedleft\let\newline\\\arraybackslash\hspace{0pt}}m{#1}}
\newtheorem{Theorem}{Theorem}[section]
\newtheorem{Proposition}[Theorem]{Proposition}
\newtheorem{Remark}[Theorem]{Remark}
\newtheorem{Lemma}[Theorem]{Lemma}
\newtheorem{Definition}[Theorem]{Definition}
\let\expandafter\oldproof\csname\string\proof\endcsname
\let\oldendproof\endproof
\renewenvironment{proof}[1][\proofname]{
\oldproof[\ttfamily\scshape \bf #1.]
}{\oldendproof}
\def\ve{\varepsilon}
\def\emp{\emptyset}
\def\dom{{\rm dom}\,}
\def\ox{\overline{x}}
\def\oy{\overline{y}}
\def\Bar{\overline}
\def\ra{\rangle}
\def\la{\langle}
\def\ve{\varepsilon}
\def\epsilon{\varepsilon}
\def\ox{\bar{x}}
\def\oy{\bar{y}}
\def\dom{\mbox{\rm dom}\,}
\def\ph{\varphi}
\def\emp{\emptyset}
\def\oR{\Bar{\R}}
\def\lm{\lambda}
\def \N{{\rm I\!N}}
\def \R{{\rm I\!R}}
\numberwithin{equation}{section}
\begin{document}

\title{\bf Local Minimizers of Nonconvex Functions\\ in Banach Spaces via Moreau Envelopes}
\author{Pham Duy Khanh\footnote{Department of Mathematics, Ho Chi Minh City University of Education, Ho Chi Minh City, Vietnam. E-mail: pdkhanh182@gmail.com} \quad Vu V.H. Khoa\footnote{Department of Mathematics, Wayne State University, Detroit, Michigan, USA. E-mail: khoavu@wayne.edu. Research of this author was partly supported by the US National Science Foundation under grant DMS-2204519.}\quad Boris S. Mordukhovich\footnote{Department of Mathematics, Wayne State University, Detroit, Michigan, USA. E-mail: aa1086@wayne.edu. Research of this author was partly supported by the US National Science Foundation under grants DMS-1808978 and DMS-2204519, by the Australian Research Council under Discovery Project DP-190100555, and by Project~111 of China under grant D21024.}\quad Vo Thanh Phat\footnote{Department of Mathematics, Wayne State University, Detroit, Michigan, USA. E-mail: phatvt@wayne.edu. Research of this author was partly supported by the US National Science Foundation under grants DMS-1808978 and DMS-2204519.}}
\maketitle
\begin{center}
{\bf To Tam\'as Terlaky in honor of his 70th birthday\\ with friendship and great respect}
\end{center}

\medskip
\begin{quote}
\noindent {\bf Abstract.} 
This paper investigates the preservation of local minimizers and strong minimizers of extended-real-valued lower semicontinuous functions under taking their Moreau envelopes. We address a general setting of Banach spaces, while all the obtained results are new even for functions in finite dimensions. Our main motivation came from applications to numerical methods in nonsmooth optimization dealing with broad classes of nondifferentiable and nonconvex functions. The paper also formulates and discusses some open questions stemming from this study.

\medskip
\noindent {\bf Keywords.}  Local minimizers and strong local minimizers, Moreau envelopes, proximal mappings, convex and variational analysis, generalized differentiation, Banach and Hilbert spaces. 

\medskip
\noindent {\bf Mathematics Subject Classification (2020).}\  49J52, 49J53, 49K27, 26E25, 90C45
\end{quote}

\section{Introduction}
The concept of Moreau envelopes, introduced by Jean-Jacques Moreau in \cite{Moreau}, has been recognized as a fundamental concept in convex and variational analysis, optimization, and their numerous applications. This concept has been well understood as an efficient tool to study both theoretical and numerical aspects of analysis and its applications.\vspace*{0.03in}

Moreau envelopes were first defined and explored for extended-real-valued convex functions and then were expanded to a general class of lower semicontinuous (l.s.c.) functions in finite-dimensional and infinite-dimensional spaces. It has been confirmed that this concept provides an effective {\em regularization} of the given function to get certain desired properties that include continuity, differentiability, full domain, etc. For further explorations of Moreau envelopes in these and other contexts of convex and variational analysis, we refer the reader to the books \cite{Bauschke2011,Correa,Rockafellar98,Thibault} and their extended bibliographies. 

From the viewpoint of optimization, a crucial property of Moreau envelopes is their ability to preserve the global minimum value and the set of {\em global minimizers}; see \cite[Example~1.46]{Rockafellar98} with the proof and references therein. In the local setting, an optimization relationship between a given function and its Moreau envelope was established for the particular type of solutions known as {\em tilt-stable local minimizers}. This remarkable notion was introduced by Poliquin and Rockafellar \cite{Poli} for extended-real-valued functions on finite-dimensional spaces and then developed by Mordukhovich and Nghia \cite{nghia} in the Hilbert space setting. By now, tilt-stable minimizers have been largely investigated and characterized for various classes of constrained optimization problems in both finite and infinite dimensions. We refer the reader to the book by the third author \cite{Mordukhovich24} for a full account on tilt stability and its applications.

The aforementioned relationship for tilt-stable minimizers was established by Mordukhovich and Sarabi in the proof of \cite[Theorem~6.2]{BorisEbrahim} for the special class of {\em prox-regular} and {\em subdifferentially continuous} functions on $\R^n$, which play a pivoting role in second-order variational analysis. This result says that if $\ox$ is a tilt-stable local minimizer for such a function, then it is a tilt-stable minimizer for its Moreau envelope. The obtained result reduces finding tilt-stable minimizers of extended-real-valued prox-regular functions to determining such minimizers of their Moreau envelopes, which are continuously differentiable with Lipschitz continuous gradients. This scheme has been implemented in \cite{BorisKhanhPhat,kmptjogo,kmptmp,Mordukhovich24,BorisEbrahim} for developing generalized Newton algorithms to find tilt-stable local minimizers as well as global solutions to various classes of optimization and related problems in finite-dimensional spaces.\vspace*{0.03in} 

Apart of  this, we are not familiar with any relationships between usual (not tilt-stable) {\em local minimizers} of a given function and its Moreau envelope, even in finite dimensions. Establishing such relationships is a challenging issue being instrumental to conduct {\em local convergence analysis} of {\em nonconvex} optimization problems by employing the aforementioned scheme for {\em generalized  Newtonian algorithms}. Furthermore, the {\em inexact proximal point method} for a given function is essentially the {\em inexact gradient descent method} for its Moreau envelope; see \cite{kmpt23.3,Dat,kmt23.2}. Therefore, exploring the preservation of local minimizers for Moreau envelopes provides insights into conducting local convergence  analysis of the inexact proximal point method and its variants from the perspectives of the inexact gradient descent method and its modifications.\vspace*{0.03in}

The {\em primary goal} of this paper is to establish the {\em equivalence} between {\em arbitrary local minimizers} of a given l.s.c.\ prox-bounded function and its Moreau envelope under general unrestrictive assumptions in {\em reflexive Banach spaces}. Moreover, such an equivalence will also be established for {\em strong local minimizers} with some quantitative modulus relationships in the setting of {\em Hilbert spaces}. To achieve these aims, we use powerful tools of variational analysis married to geometric theory of Banach spaces.\vspace*{0.03in}

The subsequent content is organized as follows. Section~\ref{sec:prelim} presents some preliminary material from variational analysis and optimization needed for the formulations and proofs of the main results. Section~\ref{sec:localminimum} revolves around the relationships between local minimizers of functions and their Moreau envelopes. Section~\ref{sec:strong} is devoted to the study of such relationships for strong local minimizers. The final Section~\ref{sec:open} formulates some open questions for the future research.

\section{Preliminaries}\label{sec:prelim}
In this section, we define and discuss some notions from variational analysis and generalized differentiation that are frequently used in what follows. More details and references can be found in the books \cite{Mordukhovich24,Rockafellar98,Thibault} in finite and infinite dimensions. We also recall the notions of local and strong local minimizers that are investigated in this paper. For the notions and results from geometry of Banach spaces, we refer the reader to the book \cite{fabian}. 

Unless otherwise stated, the spaces under consideration are assumed to be {\em Banach} with the generic symbols $\|\cdot\|$ for norms and $\langle\cdot,\cdot\rangle$ for the canonical pairing between a space $X$ and its topological dual $X^*$. We use `$\rightarrow$' and `$\overset{w}{\rightarrow}$' to indicate the strong/norm and weak convergence, respectively. The symbols $\mathbb{B}_{r}(x)$ and $B_r (x)$ signify the closed and open balls centered at $x$ with radius $r>0$, respectively. Given $C\subset X$ and $x\in X$, the {\em distance function to} $C$ is defined as 
\begin{equation}\label{dist}
d(x;C):= \inf_{y\in C} \|x-y\|\;\mbox{ for all }\;x\in X.
\end{equation}
Given an l.s.c.\ function $\ph\colon X\to\oR$ on a Banach space $X$,
the {\em Moreau envelope} of $\ph$ is
\begin{equation}\label{el}
e_\lambda \varphi(x) := \inf_{w\in X}\left\{\varphi(w) + \frac{1}{2\lambda}\|w-x\|^2\right\},\quad x\in X,
\end{equation}   
where $\lm>0$ is a parameter. The corresponding {\em proximal mapping} is
\begin{equation}\label{pl}
P_\lambda \varphi(x) := \text{\rm argmin}_{w\in X}\left\{ \varphi(w) + \frac{1}{2\lambda}\|w-x\|^2\right\},\quad x\in X.
\end{equation}
As in \cite{Thibault}, we say that $\ph\colon X\to\oR$ is {\em prox-bounded} if 
\begin{equation}\label{prox-bounded}
\varphi(x)\ge\alpha\|x-\ox\|^2+\beta\;\mbox{ for some }\;\alpha,\beta\in \R\;\mbox{ and }\;\ox\in X.
\end{equation}
Recall further that $\ox^*\in X^*$ is a {\em proximal subgradient} of $\ph$ at $\ox\in\dom\ph$ if there exist positive numbers $r$ and $\varepsilon$ for which
\begin{equation}\label{fixed1}
\varphi(x)\ge \varphi(\ox) + \la \ox^*, x - \ox\ra -\frac{r}{2}\|x - \ox\|^2\;\mbox{ whenever }\;x \in B_{\epsilon}(\ox).
\end{equation}
The collection of all proximal subgradients of $\varphi$ at $\ox$ is called the {\em proximal subdifferential} of the function
at this point and is denoted by $\partial_P\varphi(\ox)$.
  
\medskip 
We say that a function $\varphi:X\rightarrow \overline{\R}$ is {\em weakly sequentially lower semicontinuous} (weakly sequentially l.s.c.) at $\ox \in \dom \varphi$ if for any sequence $\{x_k\}$ which weakly converges to $\ox$, it holds that $\liminf_{k\rightarrow \infty}\varphi (x_k) \ge \varphi (\ox)$. This function is called {\em weakly sequentially l.s.c.} if the preceding property holds for all $\ox\in \dom \varphi$. It easily follows from the definition that $\varphi$ is weakly sequentially l.s.c. if and only if the level sets $\{x\in X\mid \varphi (x) \le \alpha\}$ are weakly sequentially closed in $X$ for all $\alpha\in \R$. We clearly have that the weak sequential lower semicontinuity of a function implies its lower semicontinuity, while not vice versa. The weak sequential l.s.c.\ of an extended-real-valued function $\varphi:X\rightarrow \overline{\R}$ together with its {\em coercivity}
\begin{equation}\label{coer}
|\varphi(x)|\rightarrow \infty \ \text{ whenever }\ \|x\|\rightarrow \infty
\end{equation}
guarantees the existence of its {\em global minimizer} according to the following version of the Weierstrass existence theorem taken from \cite[Theorem~7.3.7]{FA}.

\begin{Proposition}\label{minimizer} 
Let $X$ be a reflexive Banach space, and let $\varphi: X \rightarrow \mathbb{R}$ be a coercive and weakly sequentially lower semicontinuous function. Then there exists $x_0 \in X$ such that
$$
f\left(x_0\right)=\inf _{x \in X} f(x).
$$
\end{Proposition}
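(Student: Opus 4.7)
The plan is to use the classical direct method of the calculus of variations. First I set $m := \inf_{x \in X} \varphi(x)$, which a priori lies in $[-\infty,+\infty)$ since $\varphi$ is real-valued, and choose a minimizing sequence $\{x_k\} \subset X$ with $\varphi(x_k) \to m$. The strategy is then to show this sequence is norm-bounded, extract a weakly convergent subsequence by reflexivity, and use weak sequential lower semicontinuity to identify the weak limit as a global minimizer.

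To prove boundedness, I would argue by contradiction: if $\|x_{k_j}\| \to \infty$ along a subsequence, then coercivity~\eqref{coer} forces $|\varphi(x_{k_j})| \to \infty$, which conflicts with the convergence $\varphi(x_k) \to m$ whenever $m \in \R$. To rule out the a priori case $m = -\infty$, I would first apply a preliminary instance of the direct method on the closed ball $\mathbb{B}_R(0)$ for a fixed but arbitrary $R>0$: this ball is weakly sequentially compact by the Eberlein--\v{S}mulian theorem (using reflexivity of $X$), so weak sequential lower semicontinuity yields a point where $\varphi$ attains its infimum over $\mathbb{B}_R(0)$; combined with the control on $\varphi$ outside a sufficiently large ball coming from~\eqref{coer}, this excludes $m = -\infty$ and justifies the boundedness of the global minimizing sequence.

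With $\{x_k\}$ bounded in the reflexive space $X$, the Eberlein--\v{S}mulian theorem again provides a subsequence $x_{k_j} \overset{w}{\rightarrow} x_0 \in X$. The weak sequential lower semicontinuity of $\varphi$ then gives
\begin{equation*}
\varphi(x_0) \le \liminf_{j \to \infty} \varphi(x_{k_j}) = m,
\end{equation*}
while the opposite inequality $\varphi(x_0) \ge m$ is immediate from the definition of the infimum; hence $x_0$ realizes the minimum. The main technical point I anticipate is the slightly unusual form of coercivity in~\eqref{coer} (which bounds $|\varphi|$ rather than $\varphi$ from below at infinity), so that excluding $m=-\infty$ becomes the only nontrivial step rather than an immediate one-liner; every other piece of the argument is a direct invocation of reflexivity and the weak sequential lower semicontinuity hypothesis.
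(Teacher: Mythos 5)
The paper does not prove this proposition; it is quoted from \cite[Theorem~7.3.7]{FA}, so your proposal has to stand on its own. Your overall architecture --- minimizing sequence, boundedness, weak sequential compactness of bounded sets via reflexivity (Kakutani/Eberlein--\v{S}mulian), and weak sequential lower semicontinuity at the weak limit --- is the standard direct method and is exactly the right skeleton; the final identification $\varphi(x_0)\le\liminf_j\varphi(x_{k_j})=m$ is correct as written.

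There is, however, a genuine gap precisely at the point you yourself flag as the main technical issue: the exclusion of $m=-\infty$. You claim that ``the control on $\varphi$ outside a sufficiently large ball coming from~\eqref{coer}'' rules out $m=-\infty$, but \eqref{coer} only asserts $|\varphi(x)|\to\infty$ as $\|x\|\to\infty$, which provides no lower bound whatsoever on $\varphi$ far from the origin --- it is perfectly compatible with $\varphi(x)\to-\infty$. Taken literally, the statement with \eqref{coer} is even false: on $X=\R^n$ the function $\varphi(x)=-\|x\|^2$ is continuous (hence weakly sequentially l.s.c., since weak and norm convergence coincide in finite dimensions) and satisfies $|\varphi(x)|\to\infty$, yet $\inf\varphi=-\infty$ is not attained. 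So no argument can close this step under the literal hypothesis; the coercivity must be read in the standard form $\varphi(x)\to+\infty$ as $\|x\|\to\infty$, which is what the cited theorem uses and, notably, what the paper actually verifies when it invokes this proposition in the proof of Proposition~\ref{continuity-prox} (there the right-hand side of \eqref{eq:coercive-1} is shown to tend to $+\infty$). Under that reading, your ball argument does establish $m>-\infty$ (the closed ball is weakly sequentially compact and, being closed and convex, weakly sequentially closed by Mazur's theorem, so $\varphi$ attains a finite infimum on it, while $\varphi\ge$ any prescribed constant outside a large enough ball), the minimizing sequence is bounded, and the rest of your proof goes through. The fix is therefore not a new idea but an explicit replacement of $|\varphi(x)|\to\infty$ by $\varphi(x)\to+\infty$ before running your argument.
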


We conclude this section with recalling the notions of local minimizers and strong local minimizers considered in what follows.

\begin{Definition}\label{minimizers} \rm Let $\varphi:X\to\overline{\R}$, and let $\ox \in \dom \varphi$. We say that:

{\bf(i)}  $\ox$ is a {\em local minimizer} of $\varphi$ if there exists $\epsilon>0$ such that
$$
\varphi(x) \geq \varphi(\ox) \quad \text{for all }\; x \in B_\epsilon(\ox). 
$$
 
{\bf (ii)} $\ox$ is a {\em strong local minimizer} of $\varphi$ with modulus $\sigma>0$ if there exists $\epsilon>0$ such that
$$
\varphi(x) \geq  \varphi(\ox) +\frac{\sigma}{2}\|x-\ox\|^2 \quad \text{for all }\; x \in B_\epsilon(\ox).  
$$
\end{Definition}

\section{Relationships for Local Minimizers}\label{sec:localminimum}

In this section, we establish relationships between local minimizers of a function $\ph\colon X\to\oR$ and of its Moreau envelope $e_\lm\ph$ in Banach spaces. To proceed, we first need to recall some known results used below. Two following lemmas  are taken from \cite[Propositions~6.1 and 6.2]{KKMP23-2}. 

\begin{Lemma}\label{prop:charac-prox-bound}
$\ph\colon X\to\oR$ be an l.s.c.\ function defined on a Banach space $X$. Then we have the three equivalent assertions:

{\bf(i)} $\varphi$ is prox-bounded in the sense of \eqref{prox-bounded}.

{\bf(ii)} For some $\lambda >0$ and $x\in X$, it holds that $e_{\lambda} \varphi (x) >-\infty$.

{\bf(iii)} There is $\lambda_0 >0$ such that $e_{\lambda}\varphi (x)>-\infty$ for any $0<\lambda<\lambda_0$ and any $x\in X$.
\end{Lemma}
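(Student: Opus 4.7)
The plan is to prove the triple equivalence via the cyclic chain (iii) $\Rightarrow$ (ii) $\Rightarrow$ (i) $\Rightarrow$ (iii). The implication (iii) $\Rightarrow$ (ii) is immediate: given the $\lambda_0$ from (iii), choose any $\lambda\in(0,\lambda_0)$ and any $x\in X$.

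For (ii) $\Rightarrow$ (i), suppose $c:=e_\lambda\varphi(x_0)>-\infty$ for some fixed $\lambda>0$ and $x_0\in X$. The definition \eqref{el} of the Moreau envelope as an infimum over $w$ immediately yields $\varphi(w)+\frac{1}{2\lambda}\|w-x_0\|^2\ge c$ for every $w\in X$, and rearranging gives $\varphi(w)\ge-\frac{1}{2\lambda}\|w-x_0\|^2+c$. This is precisely \eqref{prox-bounded} with the choices $\bar x=x_0$, $\alpha=-\frac{1}{2\lambda}$, and $\beta=c$. What makes the argument work is that prox-boundedness as formulated in \eqref{prox-bounded} tolerates a negative $\alpha$.

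The main content is (i) $\Rightarrow$ (iii). Starting from $\varphi(w)\ge\alpha\|w-\bar x\|^2+\beta$, I would first reduce to the case $\alpha<0$, which is legitimate because decreasing $\alpha$ preserves the lower bound. The triangle inequality combined with $(a+b)^2\le 2a^2+2b^2$ gives $\|w-\bar x\|^2\le 2\|w-x\|^2+2\|x-\bar x\|^2$, and multiplying by the negative scalar $\alpha$ reverses this to $\alpha\|w-\bar x\|^2\ge 2\alpha\|w-x\|^2+2\alpha\|x-\bar x\|^2$. Substituting into the envelope objective yields
\[
\varphi(w)+\frac{1}{2\lambda}\|w-x\|^2\ge\Bigl(\frac{1}{2\lambda}+2\alpha\Bigr)\|w-x\|^2+2\alpha\|x-\bar x\|^2+\beta.
\]
Setting $\lambda_0:=-\frac{1}{4\alpha}>0$, for every $\lambda\in(0,\lambda_0)$ the coefficient of $\|w-x\|^2$ is nonnegative, so taking the infimum over $w$ produces the finite lower bound $e_\lambda\varphi(x)\ge 2\alpha\|x-\bar x\|^2+\beta$ uniformly in $x\in X$, which is exactly (iii).

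The principal obstacle here is little more than sign bookkeeping: one has to watch the direction of inequalities carefully when transferring the $\bar x$-centered quadratic bound to an $x$-centered one, and one must pinpoint the sharp threshold $\lambda_0$ past which the $w$-quadratic inside the envelope ceases to be coercive. No reflexivity, weak topology, or completeness of $X$ enters the argument—only the triangle inequality—so the equivalence is in truth a pointwise normed-space fact; and even the lower semicontinuity hypothesis on $\varphi$ is not actually used in the proof sketched above, although it is the natural setting for the subsequent parts of the paper.
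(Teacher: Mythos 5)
Your proof is correct. Note, however, that the paper contains no proof of this lemma to compare against: it is imported verbatim from \cite[Proposition~6.1]{KKMP23-2}, so the argument you give is exactly the kind of self-contained justification the citation replaces, and it is the standard one (normalize to $\alpha<0$, recenter the quadratic minorant via $\|w-\ox\|^2\le 2\|w-x\|^2+2\|x-\ox\|^2$, and read off the threshold $\lambda_0=-1/(4\alpha)$ at which the $w$-quadratic in the envelope stops being bounded below). The only microscopic caveat is in (ii)$\Rightarrow$(i): if $\varphi\equiv+\infty$ then $c=e_\lambda\varphi(x_0)=+\infty$ is not an admissible $\beta\in\R$, but in that degenerate case \eqref{prox-bounded} holds with any $\alpha,\beta$, so the implication survives; for proper $\varphi$ your choice $\beta=c$ is fine. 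Your closing observation is also accurate: neither lower semicontinuity nor any completeness or reflexivity of $X$ is used, and the threshold produced by your argument is consistent with the value $\lambda_\varphi$ the paper subsequently relies on.
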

\noindent
The positive constant $\lambda_0$ in (iii) is usually called the {\em threshold of prox-boundedness} of $\varphi$.

\begin{Lemma}\label{lem:Patxbar} Let $\varphi:X\to \oR$ be an l.s.c.\ function on a Banach space $X$, and let $\ox\in \dom\varphi$. Then we have the three equivalent assertions:

{\bf(i)} $\varphi$ is prox-bounded, and $0\in X^*$ is a proximal subgradient of $\varphi$ at $\ox$.

{\bf(ii)} $P_{\lambda} \varphi (\ox)=\{\ox\}$ for some $\lambda >0$.

{\bf(iii)} $P_{\lambda} \varphi (\ox)=\{\ox\}$ for all $\lm>0$ sufficiently small.
\end{Lemma}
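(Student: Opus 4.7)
The plan is to establish the cycle (iii)$\Rightarrow$(ii)$\Rightarrow$(i)$\Rightarrow$(iii). The first implication is immediate, so the substance lies in the other two.

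For (ii)$\Rightarrow$(i), I would observe that $P_\lm\varphi(\ox)=\{\ox\}$ means $\ox$ is the unique global minimizer of $w\mapsto\varphi(w)+\tfrac{1}{2\lm}\|w-\ox\|^2$, which immediately yields the single inequality
\[
\varphi(w)\ge \varphi(\ox)-\tfrac{1}{2\lm}\|w-\ox\|^2\quad\text{for all }w\in X.
\]
This one inequality does double duty: it is exactly prox-boundedness \eqref{prox-bounded} with $\alpha=-1/(2\lm)$, $\beta=\varphi(\ox)$, and anchor $\ox$; and, read against \eqref{fixed1}, it certifies $0\in\partial_P\varphi(\ox)$ with modulus $r=1/\lm$ and any $\epsilon>0$.

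The main work is (i)$\Rightarrow$(iii). Fix $\ox^*=0\in\partial_P\varphi(\ox)$ with constants $r,\epsilon>0$ from \eqref{fixed1}, and prox-boundedness constants $\alpha,\beta,\ox_0$ from \eqref{prox-bounded} (the case $\alpha\ge 0$ being only easier, I focus on $\alpha<0$). Set $g_\lm(w):=\varphi(w)+\tfrac{1}{2\lm}\|w-\ox\|^2$; I need to show that for all sufficiently small $\lm>0$, $\ox$ is the unique global minimizer of $g_\lm$, splitting the analysis over $B_\epsilon(\ox)$ and its complement. On $B_\epsilon(\ox)$, the proximal subgradient inequality gives
\[
g_\lm(w)\ge \varphi(\ox)+\tfrac{1}{2}\bigl(\tfrac{1}{\lm}-r\bigr)\|w-\ox\|^2,
\]
which strictly exceeds $\varphi(\ox)$ for $w\ne\ox$ whenever $\lm<1/r$. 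On $X\setminus B_\epsilon(\ox)$, I would use the quadratic triangle estimate $\|w-\ox_0\|^2\le 2\|w-\ox\|^2+2\|\ox-\ox_0\|^2$, which (since $\alpha<0$ reverses the sign) gives
\[
g_\lm(w)\ge \bigl(2\alpha+\tfrac{1}{2\lm}\bigr)\|w-\ox\|^2+2\alpha\|\ox-\ox_0\|^2+\beta.
\]
Once $\lm<1/(-4\alpha)$ the coefficient $2\alpha+\tfrac{1}{2\lm}$ is positive, so on the complement the right-hand side is bounded below by $(2\alpha+\tfrac{1}{2\lm})\epsilon^2+2\alpha\|\ox-\ox_0\|^2+\beta$, and a further shrinkage of $\lm$ makes this larger than $\varphi(\ox)$. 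Combining the two regions yields $g_\lm(w)>\varphi(\ox)=g_\lm(\ox)$ for every $w\ne\ox$, hence $P_\lm\varphi(\ox)=\{\ox\}$ for all small $\lm>0$.

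The step I expect to be the main obstacle is the anchor-switching on the complement: the prox-boundedness bound is anchored at an arbitrary $\ox_0$, possibly distant from $\ox$, and one must absorb the $\lm$-independent offset $2\alpha\|\ox-\ox_0\|^2+\beta$ into a controllable constant while keeping the quadratic term in $\|w-\ox\|^2$ with a positive coefficient for small $\lm$. The triangle estimate handles this cleanly, and Lemma~\ref{prop:charac-prox-bound} confirms that prox-boundedness remains in force across all small $\lm$, so no separate existence argument is needed beyond the strict inequality achieved above.
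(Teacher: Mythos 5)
Your proof is correct: the cycle (iii)$\Rightarrow$(ii)$\Rightarrow$(i)$\Rightarrow$(iii) is complete, the single global inequality in (ii)$\Rightarrow$(i) does indeed certify both prox-boundedness and $0\in\partial_P\varphi(\ox)$ simultaneously, and the near/far splitting with the quadratic triangle estimate correctly handles the anchor mismatch in (i)$\Rightarrow$(iii). The paper itself gives no proof of this lemma (it is quoted from \cite[Proposition~6.2]{KKMP23-2}), but your argument is the natural self-contained one and I see no gap.
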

 
To establish the main result of this section, we need the following proposition of its own interest about a nice behavior of proximal mappings in reflexive Banach spaces. 

\begin{Proposition}\label{continuity-prox}
Let $\varphi: X \rightarrow \overline{\mathbb{R}}$ be a proper, prox-bounded, and weakly sequentially l.s.c.\ function defined on a reflexive Banach space $X$. Fixing any $0<\lambda <\lambda_{\varphi}$, we have the assertions:

{\rm {\bf (i)}} $P_{\lambda}\varphi (x)\neq \emptyset$ for any $x\in X$.

{\rm {\bf (ii)}} If $w^k \in P_\lambda \varphi(x^k)$ for $k\in \N$ such that $x^k \rightarrow \bar{x}$ as $k\to\infty$, then the sequence $\left\{w^k\right\}_{k \in \mathbb{N}}$ is bounded and all its weak accumulation points lie in $P_\lambda \varphi(\ox)$.
\end{Proposition}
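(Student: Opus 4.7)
The plan is to derive (i) from the Weierstrass-type existence result in Proposition~\ref{minimizer} applied to the function $\psi_x(w) := \ph(w) + \frac{1}{2\lm}\|w - x\|^2$ for each fixed $x \in X$. Weak sequential lower semicontinuity of $\psi_x$ is immediate: $\ph$ is weakly sequentially l.s.c.\ by assumption, and $w \mapsto \|w - x\|^2$ is convex and norm continuous, hence weakly sequentially l.s.c.\ as well. Coercivity, which is the non-trivial ingredient, will be obtained by picking any $\lm' \in (\lm, \lm_\ph)$ and performing the splitting
\[ \ph(w) + \frac{1}{2\lm}\|w - x\|^2 = \Big[\ph(w) + \frac{1}{2\lm'}\|w - x\|^2\Big] + \Big(\frac{1}{2\lm} - \frac{1}{2\lm'}\Big)\|w - x\|^2 \ge e_{\lm'}\ph(x) + \Big(\frac{1}{2\lm} - \frac{1}{2\lm'}\Big)\|w - x\|^2, \]
where $e_{\lm'}\ph(x)$ is finite by Lemma~\ref{prop:charac-prox-bound}(iii) and the coefficient of the quadratic term is strictly positive. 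Thus $\psi_x(w) \to \infty$ as $\|w\| \to \infty$, and Proposition~\ref{minimizer} provides a minimizer, yielding $P_\lm \ph(x) \ne \emp$.

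For (ii), the same splitting idea delivers boundedness of $\{w^k\}$. Fixing any $w_0 \in \dom\ph$ gives the uniform upper bound $e_\lm \ph(x^k) \le \ph(w_0) + \frac{1}{2\lm}\|w_0 - x^k\|^2 \le M$, since $\{x^k\}$ is convergent and hence bounded. Writing $e_\lm \ph(x^k) = \ph(w^k) + \frac{1}{2\lm}\|w^k - x^k\|^2$ and applying the above splitting inequality with the same $\lm' \in (\lm, \lm_\ph)$ uniformly in $k$, one obtains $\big(\frac{1}{2\lm} - \frac{1}{2\lm'}\big)\|w^k - x^k\|^2 \le M - e_{\lm'}\ph(x^k)$. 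A uniform lower bound for $e_{\lm'}\ph(x^k)$ follows from the prox-boundedness inequality \eqref{prox-bounded} combined with boundedness of $\{x^k\}$ (via a Young-type inequality relating $\|w - \ox\|^2$ to $\|w - x^k\|^2$). This forces $\|w^k - x^k\|$, and hence $\|w^k\|$, to be bounded.

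For the weak accumulation statement, reflexivity of $X$ together with the Eberlein--Smulian theorem ensures that any subsequence of the bounded family $\{w^k\}$ admits a weakly convergent sub-subsequence $w^{k_j} \overset{w}{\rightarrow} \bar w$. Using weak sequential l.s.c.\ of $\ph$ and of $\|\cdot - \ox\|^2$, together with the strong convergence $x^{k_j} \to \ox$, one passes to the $\liminf$ to get
\[ \ph(\bar w) + \frac{1}{2\lm}\|\bar w - \ox\|^2 \le \liminf_{j \to \infty}\Big[\ph(w^{k_j}) + \frac{1}{2\lm}\|w^{k_j} - x^{k_j}\|^2\Big] = \liminf_{j \to \infty} e_\lm \ph(x^{k_j}) \le e_\lm \ph(\ox), \]
where the last inequality invokes upper semicontinuity of $e_\lm \ph$, which holds because for each fixed $w$ the function $x \mapsto \ph(w) + \frac{1}{2\lm}\|w - x\|^2$ is continuous in $x$, and pointwise infima of continuous families are USC. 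This says $\bar w \in P_\lm \ph(\ox)$ and completes the argument. The main technical obstacle is the coercivity/uniform boundedness step: although morally a consequence of prox-boundedness, in a general Banach space no orthogonal decomposition is available, and the cleanest route appears to be the $\lm' \in (\lm, \lm_\ph)$ splitting, which transfers the burden onto the finite-valuedness of $e_{\lm'}\ph$ guaranteed by Lemma~\ref{prop:charac-prox-bound}.
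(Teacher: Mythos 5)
Your proposal is correct and follows essentially the same route as the paper: coercivity of $w\mapsto\ph(w)+\frac{1}{2\lm}\|w-x\|^2$ via the splitting with an auxiliary $\lm'\in(\lm,\lm_\ph)$ and Lemma~\ref{prop:charac-prox-bound}(iii), then Proposition~\ref{minimizer} for (i), and the same quadratic estimate plus weak sequential lower semicontinuity of $(w,x)\mapsto\ph(w)+\frac{1}{2\lm}\|w-x\|^2$ for (ii). The only notable (and harmless) deviation is that where the paper invokes the global continuity of $e_\lm\ph$ from \cite{JTZ} to bound $e_\lm\ph(x^k)$ and to pass to the limit, you substitute the direct upper bound $e_\lm\ph(x^k)\le\ph(w_0)+\frac{1}{2\lm}\|w_0-x^k\|^2$ for a fixed $w_0\in\dom\ph$ together with the elementary upper semicontinuity of a pointwise infimum of continuous functions, which makes the argument slightly more self-contained.
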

\begin{proof}
First we verify (i). Fix any $x\in X$ and claim that $\varphi (\cdot) + \frac{1}{2\lambda}\|x-\cdot\|^2$ is coercive in the sense of \eqref{coer}, which then opens the door to utilizing Proposition~\ref{minimizer}. To proceed, choose any $\lambda_1 \in (\lambda, \lambda_\varphi)$ and deduce from Lemma~\ref{prop:charac-prox-bound}(iii) that $\beta:=e_{\lambda_1}\varphi (0) > -\infty$. The definition of $\beta$ implies that $\varphi (w) + \frac{1}{2\lambda_1}\|w\|^2\ge \beta$ for all $w\in X$. Therefore, we get the estimates
\begin{eqnarray}\label{eq:coercive-1}
\varphi (w) + \dfrac{1}{2\lambda}\|w-x\|^2 &\ge& -\dfrac{1}{2\lambda_1}\|w\|^2 + \dfrac{1}{2\lambda}\|w-x\|^2 + \beta \notag \\
&\ge& \left(\dfrac{1}{2\lambda}-\dfrac{1}{2\lambda_1}\right)\|w\|^2 -\dfrac{1}{\lambda}\|w\|\cdot \|x\| + \dfrac{1}{2\lambda}\|x\|^2 + \beta 
\end{eqnarray}
for all $w\in X$, where the triangle inequality is used in \eqref{eq:coercive-1}. As the leading term $\frac{1}{2\lambda}-\frac{1}{2\lambda_1}$ is positive whenever $\lambda_1>\lambda$, the right-hand side of \eqref{eq:coercive-1} tends to $\infty$ as $\|w\|\rightarrow \infty$ which verifies the coercivity of $\varphi (\cdot)+\frac{1}{2\lambda}\|x-\cdot\|^2$ on $X$. Since both $\varphi$ and the norm-square function $\|x-\cdot\|^2$ are weakly sequentially l.s.c., their sum is weakly sequentially l.s.c.\ as well. Thus it follows from Proposition~\ref{minimizer} that $P_{\lambda}\varphi (x)\neq \emptyset$, which justifies assertion (i).\vspace*{0.03in}

To verify (ii), suppose that there exist sequences $x^k\rightarrow \ox$ and $\{w_k\}\subset X$ for which $w^k \in P_\lambda \varphi(x^k)$ as $k\in \N$. Pick any $\alpha > e_{\lambda}\varphi (\ox)$. By the global continuity of $e_{\lambda}\varphi$ on $X$ proved in \cite[Proposition~3.3]{JTZ} and the fact that $x^k \rightarrow \ox$ as $k\to\infty$, it follows from $\alpha > e_{\lambda}\varphi (\ox)$ that $\alpha > e_{\lambda} \varphi (x^k)$ for sufficiently large $k\in \N$. The latter can be rewritten by the fact that $w^k\in P_\lambda \varphi(x^k)$ for all $k\in \N$ as follows:
\begin{equation}\label{eq:alpha-F}
{\alpha > e_\lambda \varphi(x^k) = \varphi(w^k)+ \frac{1}{2\lambda}\|w^k-x^k\|^2 \quad \text{for large }\;k.}
\end{equation}
Combining \eqref{eq:coercive-1} and \eqref{eq:alpha-F} for $w:=w^k$  whenever $k\in \N$ tells us that
\begin{equation}\label{trinomial}
\left(\dfrac{1}{2\lambda}-\dfrac{1}{2\lambda_1}\right)\|w^k\|^2 -\dfrac{1}{\lambda}\|w^k\|\cdot \|x^k\| + \dfrac{1}{2\lambda}\|x^k\|^2 + \beta - \alpha < 0 \quad \text{ for large }k\in \N.
\end{equation}
Since $\lambda_1>\lambda$ and the sequence $\{x^k\}$ converges (and hence is bounded), the inequality in \eqref{trinomial} implies that the sequence $\{w^k\}$ is bounded as well.

Remembering that the space $X$ is reflexive, we suppose without loss of generality the weak convergence of $(w^k,x^k)$ to $(\hat{w},\ox)$ for some $\hat{w}\in X$. Observe further that the inequality $\varphi (\hat{w}) + \frac{1}{2\lambda}\|\hat{w}-\ox\|^2\le \alpha$ holds. Indeed, the weak sequential l.s.c.\ of $\varphi$ and $\|\cdot\|^2$ yields the weak sequential l.s.c.\ of $\varphi (\cdot) + \frac{1}{2\lambda}\|\cdot - \cdot\|^2$ with respect to $(w,x)$. The latter implies that $\varphi (\hat{w}) + \frac{1}{2\lambda}\|\hat{w}-\ox\|^2\le \alpha$ by letting $k\rightarrow \infty$ in \eqref{eq:alpha-F}. Finally, letting $\alpha \searrow e_{\lambda}\varphi (\ox)$ brings us to
$$
\varphi (\hat{w}) + \dfrac{1}{2\lambda}\|\hat{w}-\ox\|^2  \le e_{\lambda}\varphi (\ox),
$$
i.e., $\hat{w}\in P_\lambda \varphi(\ox)$, which justifies (ii) and completes the proof of the proposition.
\end{proof}

\begin{Remark}\rm  The nonemptiness of the proximal values $P_{\lambda}\varphi(x)$ for $x\in X$ established in Proposition~\ref{continuity-prox}(i) plays an important role in the proof of our main results presented below. In our proof, this crucial property is obtained by using merely the prox-boundedness and weak sequential l.s.c.\ of the function defined on a reflexive Banach space. Observe to this end that the paper \cite{Penot98} contains various sufficient conditions on the function and the space in question to ensure that $P_{\lambda}\varphi (x)\neq \emptyset$. Namely, \cite[Corollary~3.7]{Penot98} requires $X$ to be $M$-reflexive and Fr\'echet smooth in order to ensure the density of the set $\{x\in X\mid P_{\lambda}\varphi (x)\neq \emptyset\}$. Comparing with our requirements, the result of \cite{Penot98} relaxes the weak sequential l.s.c.\ of $\varphi$, while demands on the other hand the $M$-reflexivity and Fr\'echet smoothness of $X$. It is worth noting that since the definition of Moreau envelopes and proximal mappings depend heavily on the norm under consideration, renorming procedures may not be useful.
\end{Remark}

The next theorem is the main outcome in this paper establishing close relationships between local minimizers of the function in question and of its Moreau envelope in Banach spaces.

\begin{Theorem}\label{localMoreau}  Let $\varphi: X \to \overline{\R}$ be an l.s.c.\ function defined on a Banach space $X$, and let $\ox\in \dom \varphi$. Assume that $\varphi$ is prox-bounded with threshold $\lambda_{\varphi}>0$, and let $0<\lambda<\lambda_{\varphi}$. Consider the following assertions:

{\bf(i)} $\ox$ is a local minimizer of $\varphi$.

{\bf(ii)} $\bar{x}$ is a local minimizer of $e_\lambda\varphi$.\\
Then we have the implication {\bf(i)}$\Longrightarrow${\bf(ii)} provided that $X$ is reflexive and that $\varphi$ is weakly sequentially lower semicontinuous. Moreover, in this case there exists a neighborhood $U_{\lambda}$ of $\ox$ such that the lower error bound for the Moreau envelope 
\begin{equation}\label{Moreauineq}
e_\lambda\varphi(x) - e_\lambda\varphi(\bar{x}) \geq  \frac{1}{2\lambda}d^2\big(x;P_{\lambda}\varphi(x)\big) \quad \text{for all} \quad x \in U_\lambda
\end{equation} 
holds via the distance function \eqref{dist}. The inverse implication {\bf(ii)}$\Longrightarrow${\bf(i)} is fulfilled if $P_\lambda \varphi(\ox)$ is nonempty, which is surely the case when $X$ is reflexive and $\varphi$ is weakly sequentially l.s.c.
\end{Theorem}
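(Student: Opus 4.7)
The plan is to establish the two implications separately, with the quantitative error bound \eqref{Moreauineq} extracted as a by-product of $(i)\Rightarrow(ii)$. The technical engines I would lean on are Lemma~\ref{lem:Patxbar} (which equates $0\in\partial_P\varphi(\bar x)$ with $P_\lambda\varphi(\bar x)=\{\bar x\}$) and Proposition~\ref{continuity-prox} (nonemptiness and weak accumulation of $P_\lambda\varphi$). For the reverse direction $(ii)\Rightarrow(i)$, under the standing nonemptiness of $P_\lambda\varphi(\bar x)$, I would first argue that $P_\lambda\varphi(\bar x)=\{\bar x\}$ by a short segment test: given $\hat w\in P_\lambda\varphi(\bar x)$, set $x_t:=(1-t)\bar x+t\hat w$ for small $t>0$, note that $\|\hat w-x_t\|=(1-t)\|\hat w-\bar x\|$, and feed $\hat w$ as a test point into the local-minimum inequality for $e_\lambda\varphi$ at $x_t$ to obtain
$$\varphi(\hat w)+\frac{1}{2\lambda}\|\hat w-\bar x\|^2=e_\lambda\varphi(\bar x)\le e_\lambda\varphi(x_t)\le \varphi(\hat w)+\frac{(1-t)^2}{2\lambda}\|\hat w-\bar x\|^2,$$
which collapses to $\|\hat w-\bar x\|=0$. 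Hence $e_\lambda\varphi(\bar x)=\varphi(\bar x)$, and the sandwich $\varphi(x)\ge e_\lambda\varphi(x)\ge e_\lambda\varphi(\bar x)=\varphi(\bar x)$ on the $e_\lambda\varphi$-minimality neighborhood gives (i).

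For $(i)\Rightarrow(ii)$, the idea is to localize the proximal mapping around $\bar x$. Since $\bar x$ is a local minimizer of $\varphi$ we have $0\in\partial_P\varphi(\bar x)$, and Lemma~\ref{lem:Patxbar} supplies $P_\lambda\varphi(\bar x)=\{\bar x\}$, so $e_\lambda\varphi(\bar x)=\varphi(\bar x)$. For $x$ near $\bar x$ and any selector $w\in P_\lambda\varphi(x)$ (which exists by Proposition~\ref{continuity-prox}(i)), Proposition~\ref{continuity-prox}(ii) forces all weak accumulation points of such $w$ as $x\to\bar x$ to lie in $P_\lambda\varphi(\bar x)=\{\bar x\}$. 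Revisiting the equality case of the weak-sequential l.s.c.\ inequalities used in that proof, the enforced equality
$$\lim\Bigl(\varphi(w^k)+\frac{1}{2\lambda}\|w^k-x^k\|^2\Bigr)=e_\lambda\varphi(\bar x)=\varphi(\bar x)$$
squeezes both $\varphi(w^k)\to\varphi(\bar x)$ and $\|w^k-x^k\|\to 0$, upgrading weak to norm convergence $w^k\to\bar x$. Consequently, on some neighborhood $U_\lambda$ of $\bar x$, every $w\in P_\lambda\varphi(x)$ lies inside the $\varphi$-minimality ball, so $\varphi(w)\ge\varphi(\bar x)=e_\lambda\varphi(\bar x)$ and therefore
$$e_\lambda\varphi(x)=\varphi(w)+\frac{1}{2\lambda}\|w-x\|^2\ge e_\lambda\varphi(\bar x)+\frac{1}{2\lambda}\|w-x\|^2\ge e_\lambda\varphi(\bar x)+\frac{1}{2\lambda}d^2\bigl(x;P_\lambda\varphi(x)\bigr),$$
which delivers both (ii) and the error bound simultaneously.

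The main obstacle I foresee is making the localization uniform: Proposition~\ref{continuity-prox}(ii) speaks of a single sequence of selectors, whereas the error bound must hold for \emph{every} $w\in P_\lambda\varphi(x)$ on a single neighborhood $U_\lambda$, so the argument should be run by contradiction on a doubly-indexed worst-case sequence $(x^k,w^k)$ and then fed into the proposition. The passage from weak to norm convergence in the reflexive Banach setting is the genuinely delicate ingredient, since it depends on the equality case of two simultaneous liminf inequalities (for $\varphi$ and for $\|\cdot\|^2$); this is precisely where reflexivity and weak sequential lower semicontinuity are indispensable. A final point worth scrutinizing is that Lemma~\ref{lem:Patxbar} supplies $P_\lambda\varphi(\bar x)=\{\bar x\}$ only for $\lambda$ below some local threshold, so extending this identity uniformly over the whole range $(0,\lambda_\varphi)$ may require combining the local minimum of $\varphi$ with the quantitative prox-boundedness estimate controlling the growth of $\varphi$ away from $\bar x$.
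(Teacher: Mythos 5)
Your proposal follows essentially the same route as the paper's own proof in both directions. For (ii)$\Longrightarrow$(i), your segment test with $x_t=(1-t)\ox+t\hat w$ is literally the paper's argument (there written as $x=\ox+\varepsilon(\oy-\ox)$, $y=\oy$, giving $(1-\varepsilon)^2\|\ox-\oy\|^2\ge\|\ox-\oy\|^2$ and hence $\oy=\ox$). For (i)$\Longrightarrow$(ii), the paper does exactly what you anticipate in your penultimate paragraph: it negates the error bound \eqref{Moreauineq}, extracts a worst-case sequence $x_k\to\ox$ with selectors $w_k\in P_\lambda\varphi(x_k)$, applies Proposition~\ref{continuity-prox} to get a weakly convergent subsequence with limit in $P_\lambda\varphi(\ox)=\{\ox\}$, and then upgrades weak to norm convergence through the equality case of the weak sequential l.s.c.\ inequality combined with the continuity of $e_\lambda\varphi$, so that $w_{k_m}$ eventually enters the $\varphi$-minimality ball and the error bound follows, contradicting the negation.

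The one substantive point is the caveat you raise at the end, and you are right to raise it: this is a genuine gap, both in your sketch and in the paper's proof as written. The paper asserts that Lemma~\ref{lem:Patxbar} yields $P_\lambda\varphi(\ox)=\{\ox\}$ for \emph{every} $\lambda\in(0,\lambda_\varphi)$, but the lemma only delivers this for all sufficiently small $\lambda>0$, and the identity cannot be extended to the whole range using prox-boundedness alone. Indeed, take $X=\R$ and $\varphi(x)=\min\{|x|,\,|x-10|-5\}$: this function is continuous, bounded below, hence prox-bounded with threshold $\lambda_\varphi=+\infty$, and $\ox=0$ is a local minimizer; yet for $\lambda=20$ one computes $P_{20}\varphi(0)=\{10\}$ and $e_{20}\varphi(x)=-5+\tfrac{(10-x)^2}{40}$ near $0$, which is strictly decreasing, so assertion (ii) fails. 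Consequently the implication (i)$\Longrightarrow$(ii), and with it the error bound \eqref{Moreauineq}, holds only for all sufficiently small $\lambda>0$ (small enough that $P_\lambda\varphi(\ox)=\{\ox\}$), not for every $\lambda$ below the prox-boundedness threshold; once that restriction is imposed, both your argument and the paper's go through. The reverse implication is unaffected, since there $P_\lambda\varphi(\ox)=\{\ox\}$ is derived from (ii) itself rather than from Lemma~\ref{lem:Patxbar}.
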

\begin{proof} First we verify implication (i)$\Longrightarrow$(ii) and estimate \eqref{Moreauineq} under the weak sequential l.s.c. of the function $\varphi$ and the reflexivity of the Banach space $X$. Suppose that $\bar{x}$ is a local minimizer of $\varphi$ and find a neighborhood $U$ of $\bar{x}$ such that
\begin{equation}\label{local}
\varphi(x)\geq \varphi(\bar{x}) \quad \text{for all} \quad x \in U.
\end{equation}
As $\ox$ is a local minimizer of $\varphi$, the origin $0\in X^*$ is a proximal subgradient of $\varphi$ at $\ox$. Lemma~\ref{lem:Patxbar} tells us that whenever $0<\lambda<\lambda_{\varphi}$ we have $P_\lambda \varphi(\bar{x})=\{\bar{x}\}$. Supposing, contrary to the existence of some $U_{\lambda}$ guaranteeing \eqref{Moreauineq}, that inside any neighborhood of $\ox$ there exists a point $x$ for which $e_\lambda\varphi(x) - e_\lambda\varphi(\bar{x}) <  \frac{1}{2\lambda}d^2\big(x;P_{\lambda}\varphi(x)\big)$. This allows us to construct a sequence $\{x_k\}$ that converges to $\ox$ and satisfies the inequality
\begin{equation}\label{contra-4}
e_\lambda\varphi(x_k) - e_\lambda\varphi(\bar{x}) <  \dfrac{1}{2\lambda}d^2\big(x_k;P_{\lambda}\varphi(x_k)\big)\;\mbox{ for all }\;k\in \N.
\end{equation}
Take any $k\in \N$ and deduce from Proposition~\ref{continuity-prox}(i) that $P_{\lambda}\varphi(x_k)\ne\emp$. The latter allows to pick some $w_k\in P_{\lambda}\varphi (x_k)$ for all $k\in \N$. Combining this with $x_k\rightarrow \ox$ as $k\to\infty$, Proposition~\ref{continuity-prox}(ii) tells us there is a subsequence $\{w_{k_m}\}_{m\in \N}$ of $\{w_k\}_{k\in \N}$ ensuring the weak convergence $w_{k_m}\xrightarrow{w}\ox$ by taking into account that $P_\lambda \varphi(\bar{x})=\{\bar{x}\}$. Since $\varphi$ is weakly sequentially l.s.c.\ at $\ox$ and $e_{\lambda}\varphi$ is continuous at $\ox=\lim_{m\rightarrow \infty}x_{k_m}$, we have the relationships
\begin{eqnarray*}
e_\lambda\varphi(\bar{x})\leq \varphi(\bar{x}) &\leq&  \liminf_{m\rightarrow \infty}\varphi(w_{k_m})\\
&=&\liminf_{m\rightarrow \infty} \left(e_{\lambda}\varphi(x_{k_m})-\frac{1}{2\lambda}\|x_{k_m}-w_{k_m}\|^2\right) \\
&=& e_{\lambda}\varphi (\ox)- \dfrac{1}{2\lambda}\limsup_{m\rightarrow \infty}\|x_{k_m}-w_{k_m}\|^2,
\end{eqnarray*}
which imply the limiting conditions 
$$
\limsup_{m\to\infty}\|x_{k_m}-w_{k_m}\|^2=0\;\mbox{ and hence }\;
\lim_{m\rightarrow \infty}\|x_{k_m}-w_{k_m}\|^2=0.
$$
Since $x_{k_m}\rightarrow \ox$ as $m\to\infty$, it turns out that $w_{k_m}\rightarrow \ox$ as well. Therefore, for sufficiently large $m\in \N$ we have $w_{k_m}\in U$ and thus get by \eqref{local} the estimates
\begin{eqnarray*}
e_\lambda\varphi(\bar{x})\leq \varphi(\bar{x})
\leq\varphi(w_{k_m})=e_\lambda\varphi(x_{k_m})-\frac{1}{2\lambda}\|x_{k_m}-w_{k_m}\|^2 \le e_\lambda\varphi(x_{k_m})-\frac{1}{2\lambda}d^2\big(x_{k_m};P_{\lambda}\varphi (x_{k_m})\big),
\end{eqnarray*}
i.e., $e_{\lambda}\varphi (x_{k_m})-e_{\lambda}\varphi (\ox) \ge \frac{1}{2\lambda}d^2\big(x_{k_m};P_{\lambda}\varphi (x_{k_m})\big)$ for large $m$. This contradicts \eqref{contra-4}, and so \eqref{Moreauineq} holds. Finally, $\bar{x}$ is a local minimizer, which justifies the first part of the theorem.\vspace*{0.03in}

To verify the reverse implication (ii)$\Longrightarrow$(i), suppose that $P_{\lambda}\varphi (\ox)\neq \emptyset$ and pick $\oy\in P_{\lambda}\varphi (\ox)$. Since $\ox$ is a local minimizer of $e_{\lambda}\varphi$, there exists a neighborhood $U$ of $\ox$ on which $e_{\lambda}\varphi (x) \ge e_{\lambda}\varphi (\ox)$. Due to the definition of $e_{\lambda}\varphi$, this amounts to saying that
\begin{equation}\label{eq:2nd-way}
\varphi (y) + \dfrac{1}{2\lambda}\|x-y\|^2 \ge \varphi (\oy) + \dfrac{1}{2\lambda}\|\ox-\oy\|^2\;\mbox{ for all }\;x\in U \;\text{ and } \; y \in X.
\end{equation}
In \eqref{eq:2nd-way}, by choosing $y:=\oy$ and $x:=\ox + \varepsilon (\oy-\ox)$ with $\ve>0$ small enough to ensure that $x\in U$, we arrive at the estimate
\begin{equation*}
\|\ox+\varepsilon(\oy-\ox)-\oy\|^2 \ge\|\ox-\oy\|^2,
\end{equation*}
which can be equivalently rewritten as
\begin{equation*}
(1-\varepsilon)^2\|\ox-\oy\|^2 \ge \|\ox-\oy\|^2,\;\mbox{ i.e., }\;(\varepsilon-2)\|\ox-\oy\|^2\ge 0.
\end{equation*}
This obviously implies that $\ox=\oy$, and hence 
$P_\lambda \varphi(\ox)= \{\ox\}$. Then 
$$
\varphi(x) \geq e_\lambda\varphi(x) \geq e_\lambda\varphi(\ox)= \varphi(\ox) + \frac{1}{2\lambda}\|\ox-\ox\|^2 = \varphi(\ox) \quad \text{for all } \; x \in U, 
$$
which justifies (i) and thus completes the proof of the theorem. 
\end{proof}

\section{Relationships for Strong Local Minimizers}\label{sec:strong}

Now we proceed with establishing {\em qualitative and quantitative} relationships between {\em strong local minimizers} of extended-real-valued functions and their Moreau envelopes in {\em Hilbert spaces}. Given an l.s.c.\ function $\varphi: X\to \oR $ with $\ox \in \dom \varphi$ and given a modulus $\sigma\ne 0$, consider the {\em quadratically $\sigma$-shifted function} associated with $\ph$ by 
\begin{equation}\label{shift}
\psi(x):=\varphi(x) -\frac{\sigma}{2}\|x-\ox\|^2 \ \text{ for all } \ x\in X.
\end{equation}  
Recall first the following lemma taken from \cite[Proposition~6.8]{KKMP23-2}, which provides precise relationships between Moreau envelopes of a given function and its quadratic shift.

\begin{Lemma}\label{aPhat}
Let $\varphi: X\rightarrow\overline{\R}$ be a prox-bounded and l.s.c.\  function defined on a Hilbert space $X$, and let $\psi$ be the corresponding $\sigma$-shift with $\sigma\ne 0$. Then 
we have the equalities
\begin{equation}\label{Moreau-shift1}
e_{\lambda}\varphi (x) = e_{\lambda/(1+\sigma\lambda)}\psi \left( \dfrac{x+\sigma\lambda \ox }{1+\sigma\lambda}\right)+\dfrac{\sigma}{2(1+\sigma\lambda)}\|x-\ox\|^2,
\end{equation}
\begin{equation}\label{envelopepsi}
e_\lambda\psi(x)=e_{{\lambda}/(1-\sigma\lambda)}\varphi\left(\frac{x -\sigma\lambda \ox}{1-\sigma\lambda}\right)-\frac{\sigma}{2(1-\sigma\lambda)}\|x-\ox \|^2
\end{equation} 
for all $\lambda\in(0,|\sigma|^{-1})$ and $x\in X$.
\end{Lemma}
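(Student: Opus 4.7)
My plan is to derive both identities from a single completing-the-square computation in the Hilbert space $X$. For \eqref{Moreau-shift1}, I would start from the definition \eqref{el} of $e_\lambda\varphi$ and substitute $\varphi(w) = \psi(w) + \frac{\sigma}{2}\|w-\ox\|^2$, which comes directly from \eqref{shift}. This rewrites the envelope as the infimum over $w \in X$ of $\psi(w)$ plus the two quadratic terms $\frac{\sigma}{2}\|w-\ox\|^2$ and $\frac{1}{2\lambda}\|w-x\|^2$. Expanding inner products and collecting the $\|w\|^2$ and linear-in-$w$ pieces merges these into a single squared norm with coefficient $\frac{1+\sigma\lambda}{2\lambda}$, centered at $y := (x+\sigma\lambda\ox)/(1+\sigma\lambda)$, plus a residual that depends only on $x$. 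The hypothesis $\lambda \in (0, |\sigma|^{-1})$ forces $1+\sigma\lambda > 0$, so the effective new parameter $\lambda' := \lambda/(1+\sigma\lambda)$ is positive and $e_{\lambda'}\psi(y)$ is a bona fide Moreau envelope.

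Once this rearrangement is done, the infimum collapses to $e_{\lambda'}\psi(y)$ and proving \eqref{Moreau-shift1} reduces to checking that the residual equals exactly $\frac{\sigma}{2(1+\sigma\lambda)}\|x-\ox\|^2$. This is a direct but somewhat fiddly calculation: gathering the $\|\ox\|^2$, $\langle x, \ox\rangle$, and $\|x\|^2$ contributions over the common denominator $2\lambda(1+\sigma\lambda)$, the cross terms cancel and the numerator telescopes to $\sigma\lambda\|x-\ox\|^2$, from which the claimed form is immediate.

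For the second identity \eqref{envelopepsi}, I would avoid repeating the computation by exploiting the symmetry in \eqref{shift}. Rewriting that definition as $\varphi(x) = \psi(x) - \frac{(-\sigma)}{2}\|x-\ox\|^2$ shows that $\varphi$ is the quadratic $(-\sigma)$-shift of $\psi$ in the sense of \eqref{shift}. Applying \eqref{Moreau-shift1} with the triple $(\varphi, \psi, \sigma)$ replaced by $(\psi, \varphi, -\sigma)$, and noting that the admissible interval $\lambda \in (0, |-\sigma|^{-1})$ coincides with $(0, |\sigma|^{-1})$, yields \eqref{envelopepsi} after a one-line substitution.

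The main obstacle I anticipate is the bookkeeping in the completing-the-square step, specifically making sure the residual collapses cleanly to $\frac{\sigma}{2(1+\sigma\lambda)}\|x-\ox\|^2$ rather than a different expression that only agrees with it modulo terms in $\|x\|^2$ or $\|\ox\|^2$ alone. A soft secondary point is to confirm that $\psi$ is itself prox-bounded with threshold large enough that $\lambda' = \lambda/(1+\sigma\lambda)$ falls below it; this is not hard, since the inequality \eqref{prox-bounded} for $\varphi$ translates into the same inequality for $\psi$ with $\alpha$ replaced by $\alpha - \sigma/2$, so by Lemma~\ref{prop:charac-prox-bound} the right-hand side of \eqref{Moreau-shift1} is genuinely finite and the infimum identity is an equality of extended reals rather than merely a formal manipulation.
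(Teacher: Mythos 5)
Your proposal is correct, and the completing-the-square computation does check out: the quadratic part $\frac{\sigma}{2}\|w-\ox\|^2+\frac{1}{2\lambda}\|w-x\|^2$ regroups as $\frac{1+\sigma\lambda}{2\lambda}\|w-y\|^2$ with $y=(x+\sigma\lambda\ox)/(1+\sigma\lambda)$ plus a $w$-independent residual whose numerator telescopes to $\sigma\lambda\|x-\ox\|^2$, exactly as you claim, and the symmetry $(\varphi,\psi,\sigma)\mapsto(\psi,\varphi,-\sigma)$ delivers \eqref{envelopepsi} at no extra cost. Note that the paper itself gives no proof of this lemma --- it is imported verbatim from \cite[Proposition~6.8]{KKMP23-2} --- so your argument is a self-contained substitute rather than a variant of anything in the text; it is the standard derivation one would expect in that reference. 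Two small remarks: since the identity is an equality of infima of functions of $w$ differing by the finite constant $\frac{\sigma}{2(1+\sigma\lambda)}\|x-\ox\|^2$, it holds in $\overline{\R}$ even without verifying finiteness, so your final paragraph about the prox-boundedness of $\psi$ is reassuring but not strictly needed for the equalities themselves; and your claim that \eqref{prox-bounded} passes to $\psi$ with $\alpha$ replaced by $\alpha-\sigma/2$ is literally exact only when the centering point in \eqref{prox-bounded} is the same $\ox$ as in \eqref{shift}, though the general case follows from $\|x-z\|^2\ge(1-\ve)\|x-\ox\|^2-C_\ve\|z-\ox\|^2$.
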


The next statement obviously confirms that strong local minimizers of l.s.c.\ functions reduce to the usual ones for their quadratic shifts \eqref{shift}.

\begin{Proposition}\label{localminishift} Let $\varphi: X\rightarrow\overline{\R}$ be an l.s.c.\  function defined on a Hilbert space $X$, and let $\psi$ be the corresponding $\sigma$-shift with $\sigma>0$. Then $\ox$ is a strong local minimizer of $\varphi$ with modulus $\sigma$ if and only if $\ox$ is a local minimizer of  the quadratically shifted function $\psi$ defined in \eqref{shift}. 
\end{Proposition}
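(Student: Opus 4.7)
The proposition is essentially a tautology unwinding the two definitions, so the plan is to simply compare the defining inequalities side by side on a common neighborhood $B_\epsilon(\ox)$.

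First I would fix any $\epsilon > 0$ and rewrite the strong-minimizer inequality
\[
\varphi(x) \ge \varphi(\ox) + \frac{\sigma}{2}\|x-\ox\|^2 \quad \text{for all } x \in B_\epsilon(\ox)
\]
by subtracting $\frac{\sigma}{2}\|x-\ox\|^2$ from both sides. Using the definition of $\psi$ in \eqref{shift} and the fact that $\psi(\ox) = \varphi(\ox) - \frac{\sigma}{2}\|\ox-\ox\|^2 = \varphi(\ox)$, the inequality becomes exactly
\[
\psi(x) \ge \psi(\ox) \quad \text{for all } x \in B_\epsilon(\ox),
\]
which is the definition of $\ox$ being a local minimizer of $\psi$ on the same neighborhood. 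This gives the forward implication with the same $\epsilon$.

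For the converse, starting from $\psi(x) \ge \psi(\ox)$ on $B_\epsilon(\ox)$, I would add $\frac{\sigma}{2}\|x-\ox\|^2$ to both sides and again use $\psi(\ox) = \varphi(\ox)$ to recover the strong-minimizer inequality with modulus $\sigma$ on the same neighborhood $B_\epsilon(\ox)$.

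There is no genuine obstacle here; the only thing worth noting explicitly is that the neighborhood $\epsilon$ may be taken identical in both definitions and that the sign convention $\sigma > 0$ in the hypothesis plays no role beyond keeping the strong-minimizer inequality meaningful. Consequently the proof is a two- or three-line computation, and the statement can just as well be phrased for any $\sigma \in \R$ with the evident modification on the "strong" side.
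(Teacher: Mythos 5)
Your proof is correct and follows essentially the same route as the paper's: both simply unwind Definition~\ref{minimizers} using the identity $\psi(\ox)=\varphi(\ox)$ and the construction \eqref{shift}, observing that the two defining inequalities coincide on the same ball $B_\epsilon(\ox)$. Nothing further is needed.
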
 
\begin{proof} According to Definition~\ref{minimizers}(ii), for $\ox$ to be a strong local minimizer of $\varphi$ with modulus $\sigma>0$ means the existence of $\epsilon>0$ such that 
$$
\varphi(x) \geq \varphi(\ox) + \frac{\sigma}{2}\|x-\ox\|^2 \quad \text{for all }\; x \in B_\epsilon(\ox).
$$
By construction \eqref{shift} of the $\sigma$-shift, the latter says that
$$
\psi(x) \geq \psi(\ox)  \quad \text{for all }\; x \in B_\epsilon(\ox),
$$
which means by Definition~\ref{minimizers}(i) that $\ox$ is a local minimizer of $\psi$.
\end{proof}

Now we are ready to establish the quantitative relationships between strong local minimizers for l.s.c.\ functions on Hilbert spaces and those for their Moreau envelopes. 

\begin{Theorem}[\bf strong local minimizers of Moreau envelopes]\label{localstrongMoreau}  Let $\varphi: X \to \overline{\R}$ be a proper, prox-bounded, and l.s.c. function defined on a Hilbert space $X$, and let $\ox\in \dom \varphi$. Consider the following  assertions:

{\bf(i)} $\ox$ is a strong local minimizer of $\varphi$ with modulus $\sigma>0$.

{\bf(ii)} $\ox$ is a strong local minimizer of $e_\lambda\varphi$ with modulus $\frac{\sigma}{1+\sigma\lambda}$ for all  $\lambda>0$ sufficiently small.\\
Then we have the implication {\rm(i)}$\Longrightarrow${\rm(ii)} when $\varphi$ is weakly sequentially l.s.c.\ The reverse implication {\rm(ii)}$\Longrightarrow${\rm(i)} holds if $P_\lambda \varphi(\ox)\neq \emptyset$ for all small $\lambda>0$; in particular, when $\varphi$ is weakly sequentially lower semicontinuous on $X$. 
\end{Theorem}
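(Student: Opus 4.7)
The natural approach is to reduce both implications to Theorem~\ref{localMoreau} by combining the quadratic shift of Proposition~\ref{localminishift} with the envelope identities in Lemma~\ref{aPhat}. Fix a small $\lambda\in(0,\sigma^{-1})$, set $\mu:=\lambda/(1+\sigma\lambda)$, let $\psi(x):=\varphi(x)-\frac{\sigma}{2}\|x-\ox\|^2$, and let $T(x):=(x+\sigma\lambda\ox)/(1+\sigma\lambda)$. Identity \eqref{Moreau-shift1} then rewrites as
$$
e_\lambda\varphi(x)-\frac{\sigma}{2(1+\sigma\lambda)}\|x-\ox\|^2 \;=\; e_\mu\psi\bigl(T(x)\bigr),
$$
and $T$ is a homeomorphism with $T(\ox)=\ox$. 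Applying Proposition~\ref{localminishift} to $\varphi$ and to $e_\lambda\varphi$ separately, one sees that (i) amounts to $\ox$ being an ordinary local minimizer of $\psi$, while (ii) amounts to $\ox$ being an ordinary local minimizer of $e_\mu\psi$ (for all small $\mu>0$). Hence the theorem reduces to Theorem~\ref{localMoreau} applied to the function $\psi$.

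For (ii)$\Rightarrow$(i), the above reduction gives $\ox$ as a local minimizer of $e_\mu\psi$. The function $\psi$ is l.s.c., and identity \eqref{envelopepsi} of Lemma~\ref{aPhat} at sufficiently small $\lambda$ forces $e_\lambda\psi>-\infty$, so Lemma~\ref{prop:charac-prox-bound} yields prox-boundedness of $\psi$. Moreover, the completion-of-the-square argument behind Lemma~\ref{aPhat} gives $P_\mu\psi(\ox)=P_\lambda\varphi(\ox)\neq\emp$, so the hypothesis of Theorem~\ref{localMoreau}(ii)$\Rightarrow$(i) is met. Applying that implication to $\psi$ then gives $\ox$ as a local minimizer of $\psi$, and Proposition~\ref{localminishift} finishes (i).

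For (i)$\Rightarrow$(ii), the reduction supplies $\ox$ as a local minimizer of $\psi$, and the goal is to deduce that $\ox$ is a local minimizer of $e_\mu\psi$ via Theorem~\ref{localMoreau}(i)$\Rightarrow$(ii) applied to $\psi$. The main obstacle here is that this implication requires weak sequential lower semicontinuity of its argument, but $\psi=\varphi-\frac{\sigma}{2}\|\cdot-\ox\|^2$ is not in general weakly sequentially l.s.c.\ in a Hilbert space, since $\|\cdot-\ox\|^2$ is weakly sequentially l.s.c.\ (making $-\|\cdot-\ox\|^2$ only weakly sequentially upper semicontinuous). I would circumvent this by noting that weak sequential l.s.c.\ enters the proof of Theorem~\ref{localMoreau} only through Proposition~\ref{continuity-prox}, and both assertions of that proposition for $P_\mu\psi$ follow from the corresponding properties of $P_\lambda\varphi$ (which hold by hypothesis on $\varphi$) via the bijection $P_\mu\psi(y)=P_\lambda\varphi(T^{-1}(y))$ and the continuity of $T^{-1}$. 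Substituting this transferred proximal-mapping property into the proof of Theorem~\ref{localMoreau} yields the missing implication, after which Lemma~\ref{aPhat} converts the conclusion into the strong-minimizer inequality for $e_\lambda\varphi$ with the prescribed modulus $\sigma/(1+\sigma\lambda)$.
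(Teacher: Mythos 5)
Your proposal follows the same overall route as the paper: both directions are reduced to Theorem~\ref{localMoreau} applied to the quadratic shift $\psi$ from \eqref{shift}, using Proposition~\ref{localminishift} to translate strong minimality of $\varphi$ into ordinary minimality of $\psi$, and Lemma~\ref{aPhat} to pass between $e_\lambda\varphi$ and $e_{\lambda/(1+\sigma\lambda)}\psi$; your treatment of (ii)$\Longrightarrow$(i) is in substance identical to the paper's. The genuinely valuable difference is in (i)$\Longrightarrow$(ii): the paper simply invokes Theorem~\ref{localMoreau} for $\psi$, which tacitly requires $\psi$ to be weakly sequentially l.s.c., and---as you correctly observe---this does not follow from the weak sequential l.s.c.\ of $\varphi$, since $-\frac{\sigma}{2}\|\cdot-\ox\|^2$ is only weakly sequentially \emph{upper} semicontinuous. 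Your repair via the bijection $P_{\mu}\psi\bigl(T(x)\bigr)=P_\lambda\varphi(x)$ with $\mu=\lambda/(1+\sigma\lambda)$ (a consequence of the completion of squares behind Lemma~\ref{aPhat}) is sound and closes this gap. One small correction: weak sequential lower semicontinuity enters the proof of Theorem~\ref{localMoreau} not only through Proposition~\ref{continuity-prox} but also directly, in the step $\varphi(\ox)\le\liminf_{m}\varphi(w_{k_m})$ for the weakly convergent proximal points $w_{k_m}$. Fortunately that step transfers as well: since $w_{k_m}\in P_\mu\psi\bigl(T(x_{k_m})\bigr)=P_\lambda\varphi(x_{k_m})$, one runs the same liminf estimate for $\varphi$ itself (using the continuity of $e_\lambda\varphi$ and $e_\lambda\varphi(\ox)\le\varphi(\ox)$) to obtain $\|w_{k_m}-x_{k_m}\|\to 0$, hence strong convergence $w_{k_m}\to\ox$, after which the local minimality of $\psi$ at $\ox$ yields the contradiction exactly as in the paper. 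With that extra step made explicit your argument is complete, and it in fact supplies a justification that the published proof omits.
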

\begin{proof}
We start with verifying implication (i)$\Longrightarrow$(ii) under the weak sequential l.s.c.\ of $\varphi$. Considering the shifted function $\psi$ from \eqref{shift}, it follows from Proposition~\ref{localminishift} that $\ox$ is  a local minimizer of $\psi$. Theorem~\ref{localMoreau} tells us that  $\ox$ is a local minimizer of $e_\lambda\psi$ for all small $0<\lambda <\sigma$. Fix such a number $\lambda$ and define $\gamma:=\lambda/(1+\sigma\lambda)$. Since $\gamma<\lambda$, we find $\eta>0$ ensuring that 
\begin{equation}\label{localminofpsi}
e_\gamma \psi(x) \geq e_\gamma \psi(\ox) \quad \text{for all }\; x \in B_\eta(\ox). 
\end{equation}
Denoting $\epsilon:=(1+\sigma\lambda)\eta>0$ gives us the implication
\begin{equation}\label{shrinkBall}
x\in B_{\epsilon}(\ox )\Longrightarrow\frac{x+\lambda\sigma \ox}{1+\sigma\lambda}\in B_\eta(\ox ).
\end{equation}
By Lemma~\ref{aPhat}, we have the equality
\begin{equation}\label{envephiandpsi}
e_{\lambda}\varphi (x) = e_{\gamma}\psi \left( \dfrac{x+\sigma\lambda \ox }{1+\sigma\lambda}\right)+\dfrac{\sigma}{2(1+\sigma\lambda)}\|x-\ox\|^2,\quad x\in X.
\end{equation}
Combining \eqref{localminofpsi}, \eqref{shrinkBall}, and \eqref{envephiandpsi} brings us to
\begin{equation}\label{strongenvpsi}
e_\lambda \varphi(x) \geq e_\gamma \psi(\ox) + \dfrac{\sigma}{2(1+\sigma\lambda)}\|x-\ox\|^2 \quad \text{for all }\; x \in B_\epsilon(\ox). 
\end{equation}
Furthermore, by Lemma \ref{aPhat} we have the relationships
$$
e_\gamma \psi(\ox) = e_{\gamma/(1-\sigma\gamma)} \varphi \left(\frac{\ox-\sigma\gamma\ox}{1-\sigma\gamma} \right) =e_\lambda \varphi(\ox), 
$$
which being combined with \eqref{strongenvpsi} justify (ii). 
\vspace*{0.03in} 

Next we verify implication (ii)$\Longrightarrow$(i) under the additional assumption that $P_{\lambda}\varphi(\ox)\neq\emptyset$ for all small $\lambda>0$. Suppose that $\ox$ is a strong local minimizer of the Moreau envelope $e_\lambda \varphi$ with modulus $\mu(\lambda):=\sigma/(1+\sigma\lambda)$ for all $\lambda\in(0,\sigma^{-1})$ sufficiently small. Fixing such a number $\lambda$, for each $\gamma\in(0,\lambda/(1+\sigma\lambda))$ we have $0<\gamma/(1-\sigma\gamma)<\lambda$. This allows us to choose $\eta>0$ so that
\begin{align}\label{localphimin}
e_{\gamma/(1-\sigma\gamma)}\varphi (x) & \geq    e_{\gamma/(1-\sigma\gamma)}\varphi(\ox) + \frac{\mu(\gamma/(1-\sigma\gamma))}{2}\|x-\ox\|^2\nonumber \\
&=e_{\gamma/(1-\sigma\gamma)}\varphi(\ox) + \frac{\sigma (1-\sigma\gamma)}{2}\|x-\ox\|^2 \quad  \text{for all }\;x\in B_\eta  \left(\ox \right).
\end{align}
Letting $\epsilon:=(1-\sigma\gamma)\eta>0$ yields the implication
\begin{equation}\label{shrinkingball2}
x\in B_{\epsilon}(\ox )\Longrightarrow\frac{x-\sigma\gamma\ox}{1-\sigma\gamma}\in B_\eta \left(\ox \right).
\end{equation}
By Lemma~\ref{aPhat}, we have the representation
\begin{equation}\label{envephiandpsi2}
e_\gamma\psi(x)=e_{{\gamma}/(1-\sigma\gamma)}\varphi\left(\frac{x -\sigma\gamma \ox}{1-\sigma\gamma}\right)-\frac{\sigma}{2(1-\sigma\gamma)}\|x-\ox \|^2,\quad x\in X.
\end{equation}
Combining \eqref{localphimin}, \eqref{shrinkingball2}, and \eqref{envephiandpsi2} tells us that 
$$
e_\gamma\psi (x) \geq e_{\gamma/(1-\sigma\gamma)}\varphi(\ox) + \frac{\sigma (1-\sigma\gamma)}{2}\left\|\frac{x -\sigma\gamma \ox}{1-\sigma\gamma} -\ox\right\|^2-\frac{\sigma}{2(1-\sigma\gamma)}\|x-\ox \|^2
$$
for all $x\in B_\ve(\ox)$. In other words, we get
\begin{equation}\label{miniofpsienv}
e_\gamma\psi (x) \geq e_{\gamma/(1-\sigma\gamma)}\varphi(\ox)\;\text{ whenever }\; x \in B_\epsilon(\ox). 
\end{equation} 
Employing again Lemma~\ref{aPhat} gives us the equality
$$
e_{\gamma/(1-\sigma\gamma)}\varphi(\ox) = e_{\gamma} \psi \left(\frac{\ox+\sigma\gamma\ox}{1+\sigma\gamma} \right) =e_\gamma \psi(\ox), 
$$
which ensures together with \eqref{miniofpsienv} that $\ox$ is a local minimizer of $e_\gamma\psi$. This implies by Theorem~\ref{localMoreau}  that $\ox$ is also a local minimizer of $\psi$. Using finally Proposition~\ref{localminishift}, we deduce that $\ox$ is a strong local minimizer of $\varphi$ with modulus $\sigma$ and therefore complete the proof. 
\end{proof}
 
\section{Open Questions}\label{sec:open}

The paper establishes close relationships and equivalences between arbitrary local minimizers of general extended-real-valued functions and their Moreau envelopes in reflexive Banach spaces. Moreover, such relationships and equivalences are obtained for strong local minimizers of functions and their Moreau envelopes in Hilbert spaces. This line of research should be continued, since many important unsolved questions remain. We now list just few of them.\\

$\bullet$  Is it true that Moreau envelopes preserve the other types of local minimizers such as   weak-sharp minimizers, tilt-stable minimizers in infinite-dimensions, etc.? \\

$\bullet$ How to apply the obtained results to construct second-order necessary and sufficient conditions for nonsmooth optimization problems? \\

$\bullet$ How to apply the obtained results to conduct local convergence analysis of first-order and second-order numerical optimization algorithms in finite- and infinite-dimensional spaces?\\[2ex]
\noindent
We plan to investigate these and related topics in our future research.

\small 

\end{document}